\newtheorem{theorem}{Theorem}[section]
\newtheorem{lemma}[theorem]{Lemma}
\theoremstyle{definition}
\newcommand\calS{{\mathcal S}}
\newcommand\calM{{\mathcal M}}
\newcommand\calA{{\mathcal A}}
\newcommand\calB{{\mathcal B}}
\newcommand\calC{{\mathcal C}}
\newcommand\calV{{\mathcal V}}
\newcommand\subdir{\setbox0=\hbox{$\mathchar "0362$}%
\mathop{\displaystyle\smash{\raise 0.3\ht0\hbox{\hbox to \wd0%
{\hfil\vrule height4pt width0.4pt\hfil}\kern-\wd0%
\lower\ht0\box0}}}
}
\newcommand\dogpind[1]{\left[\vphantom{\hbox{${#1}'$}}#1\right]}
\newcommand\gpind[2]{\dogpind{#1{:}#2}}
\newcommand\gen[1]{\langle#1\rangle}
\newcommand\sz[1]{\left|#1\right|}
\newcommand{\dnk}[3]{\lower 0.2ex\hbox{$#1$}\kern -.1em{\setminus}
                     \kern -.1em\raise 0.2ex\hbox{$#2$}
                     \kern -.1em{/}\kern -.1em\lower 0.2ex\hbox{$#3$}}
\title{Finding Intermediate Subgroups}
\author{Alexander Hulpke\\
Department of Mathematics\\
Colorado State University\\
1874 Campus Delivery\\
Fort~Collins, CO, 80523-1874, USA\\
\url{hulpke@colostate.edu}}
\begin{document}

\maketitle

\begin{abstract}
This article describes a practical approach for determining the lattice of
subgroups $U<V<G$ between given subgroups $U$ and $G$, provided the total
number of such subgroups is not too large.
It builds on existing functionality for element conjugacy, double cosets and
maximal subgroups.
\end{abstract}

%

\section{Introduction}

The question of determining the subgroups of a given finite group (possibly
up to conjugacy) has been
of interest since the earliest days of computational group
theory~\cite{neubueser60}
with current algorithms~\cite{hulpkeinvar,coxcannonholtlatt,hulpketfsub} being able to
work easily for groups of order several million. A fundamental difficulty
however is presented if the group has 
a large number of subgroups, overwhelming available storage.
\smallskip

This limitation indicates the need for more specific algorithms that
determine only part of the subgroup lattice of a group.
Beyond specific subgroups (such as Sylow- or Hall-), the main result in this
direction is that of maximal
subgroups~\cite{eickhulpke01,holtcannonmaxgroup}. Such a calculation can be
iterated to yield subgroups of small index~\cite{cannonholtslatterysteel}.

(The case of minimal subgroups is simply that of conjugacy classes of
elements of prime order and thus is solved.)
\medskip

In this paper we will consider the related question of {\em intermediate
subgroups}, that is given a group $G$ and a subgroup $U\le G$, we aim to
determine all subgroups $U\le V\le G$ (or, in some cases, one if it exists), as
well as inclusions amongst these subgroups.

In some situations, for example if $U\lhd G$,
or if $U$ is cyclic of small prime order, this amounts to determining a
significant part of a subgroup lattice of (a factor of) $G$ and most likely
the best approach will be simply to calculate all subgroups and then to
filter for the
desired candidates. We thus shall implicitly assume that we are in a
situation in which the number of intermediate subgroups is small.
\medskip

Such functionality has numerous applications, in some cases requiring all
intermediate subgroups, in others only one. To name just a few of them:
\begin{itemize}
\item
Birkhoff's question on the possibility of representing finite lattices as intervals in congruence
lattices has been of long standing interest in universal algebra, starting
with~\cite{graetzerschmidt63}.
Subsequently, \cite{palfypudlak} reduced the problem to
that of realizing a finite lattice as interval in a finite subgroup lattice,
prompting significant investigation of this question
interest~\cite{palfy95,watatani96,shareshian03,aschbacher08}. With no
positive answer for even small lattices ---
figure~\ref{latpic} depicts the smallest lattice that is not know to occur
as interval in a finite subgroup lattice ---
researchers have tried to gain insight by trying experimentally to to
realize particular cases on the computer~\cite{deMeoThesis}.
This requires an algorithm for intermediate subgroups.
\begin{figure}
\begin{center}
\includegraphics[width=4cm]{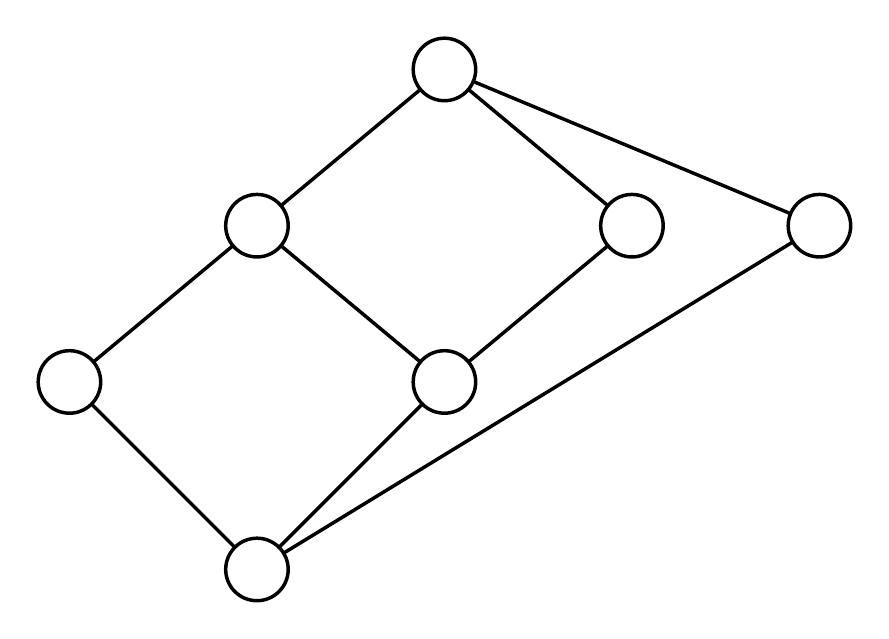}
\end{center}
\caption{The smallest open case for the lattice representation problem}
\label{latpic}
\end{figure}
\item
When representing a transversal of right cosets in permutation groups,
the standard approach of reduction to stabilizers~\cite{dixonmajeed}
can, but does not have to, reduce the storage
requirements. An example of such a situation is the transversal of a
(cyclic) 11-Sylow subgroup $U\le S_{11}$. This subgroup is transitive and
any proper subgroup trivial, thus transition to a stabilizer would require
handling every group element at some point.

Instead, utilizing the intermediate subgroup $M_{11}$ reduces storage
requirements from the index $3628800$ of $U$ down to $5040+720=5760$ coset
representatives.
\item
Double cosets, in particular in permutation groups, are a basic tool of
combinatorial enumeration as they describe how a group orbit splits up under
reduction to a subgroup. The standard approach~\cite{laue82,schmalz90}
utilizes a chain of intermediate subgroups, reducing iteratively the
calculation of $\dnk{A}{G}{B}$ to $\dnk{A_1}{G}{B}$ for $A<A_1$ and action
of $B$ on cosets of $A$.
Knowledge of intermediate subgroups roughly logarithmizes the cost of
calculations.
\item
For infinite matrix groups, proving a subgroup to be of finite index of a
subgroup (such as in~\cite{longreid11}) often reduces to a coset
enumeration.  For huge indices this is infeasible for memory reasons.  If
the expected index has been calculated~\cite{detinkoflanneryhulpke17}
through a subgroup $U<Q$ in a suitable finite quotient $Q$ of the group, an
intermediate subgroup $U<V<Q$ can be used to first rewrite the
presentation to (the pre-image of) $V$, thus opening the possibility for a
smaller index coset enumeration.

\item
The following observation however indicates that the determination of
intermediate subgroups should be expected to be hard in general:
When computing stabilizers
under group actions, often a small number of Schreier generators (which --
by the birthday paradox -- should arise after enumerating roughly the square
root of the orbit length) will generate the full stabilizer, but this is
only proven after enumerating a significant part (typically $1/p$, where $p$
is the smallest prime divisor of the stabilizer index) of the orbit,
thus establishing that the stabilizer indeed cannot be larger.

Knowledge of intermediate subgroups would allow verification of this fact,
thus establishing the stabilizer with less effort by showing that no
subgroup above the presumptive stabilizer in fact stabilizes.
\end{itemize}

Now assume that $G\ge U$ are given, and we want to enumerate the subgroups
$G>V>U$. A basic approach follows from the observation that
these subgroups correspond to block systems for the action of $G$
on the cosets of $U$. If $\gpind{G}{U}$ is small (in practice not more than a few
hundred), it is possible to determine the permutation representation
on the cosets explicitly. One then can utilize a block-finding algorithm~\cite{bealsseress92} as
a tool for finding all intermediate subgroups. This however becomes
impractical if the index gets larger, as the examples below indicate.

\section{A maximal subgroups based approach}

Instead, we shall rely on existing algorithms for maximal
subgroups~\cite{holtcannonmaxgroup,eickhulpke01} to find (conjugates of) subgroups
lying below $G$ and above $U$. 

(An alternative dual approach would be to utilize minimal supergroups, but
an algorithm for these does not yet exist.)

A second tool will be routines to determine element centralizers and test for
element conjugacy. 

Finally, the approach itself involves the calculation of double cosets. When
using this routine as a tool as part of a double coset computation (as
suggested in the introduction) there thus is a priori the potential of an
infinite recursion. This will be in general avoided by the fact that the
required double coset calculations involve strictly smaller subgroups;
however in a general purpose routine this needs to be checked for.
\bigskip

\medskip

This approach produces a new algorithm ${\tt IntermediateSubgroups}(G,U)$,
that takes as input two groups $G\ge U$ and returns as output a list of the intermediate
subgroups $G>V>U$, as well as the maximality inclusions amongst all subgroups
$G\ge V\ge U$.

The main tool for this calculation will be a further new algorithm:

${\tt
EmbeddingConjugates}(G,A,B)$ (which we shall describe later) takes as
arguments a
group $G$ and $B,A\le G$ and returns a list of
the $G$-conjugates of $A$ containing $B$, that is the subgroups $A^g>B$
for $g\in G$, together with the respective conjugating element $g$.

\subsection{Intermediate Subgroups Algorithm}

The algorithm takes as input the groups $G\ge U$ and returns a list 
$\calS$ of intermediate subgroups (including $G$), as well as a list $I$ of maximality
inclusion relations amongst these subgroups,

\begin{enumerate}[1.]
\item
Initialize $\calS:=[G]$; let $I=[\,]$.
\item
\label{step2}
While there is a subgroup $T\in\calS$ that has not been processed,
perform the following steps for $T$, otherwise end and return $\calS$ and $I$.
\item 
If $\gpind{T}{U}$ is prime, record $U<T$ as a maximality relation in $I$, mark $T$ as
processed, and go back to
step~\ref{step2}.
\item 
\label{step4}
Determine a list $\calM$ of $T$-representatives of the maximal subgroups of $T$
whose order is a multiple of $\sz{U}$.
\item
\label{step5}
For each subgroup $W\in \calM$, let $\calV_W={\tt
EmbeddingConjugates}(T,W,U)$ (that is the $T$-conjugates of $W$ containing $U$).
Add every subgroup $X\in \calV_W$ to $\calS$ (unless it is already in the
list), and add the maximality relations
$X<T$ to $I$.
\item If all sets $\calV_W$ were empty, add the maximality relation $U<T$ to
$I$.
\item 
Mark $T$ as processed, and go back to
step~\ref{step2}.
\end{enumerate}
If only one intermediate subgroup (or a maximality test) is required, the
calculation can stop in step~\ref{step5} once a single subgroup $X\in\calV$
has been found.

\begin{proof}
To see the correctness of this algorithm we notice that the only subgroups
added to $\calS$ are intermediate, and that a maximality relation is only
recorded if a subgroup is contained maximally in another. The algorithm thus
returns a list of intermediate subgroups and valid maximality relations.

To show completeness of the lists returned, consider an intermediate
subgroup $U< V< G$. If $V$ is maximal in $G$ it will be found as maximal
subgroup of $T=G$ in step~\ref{step5}.

Otherwise there will be subgroups $V<W<G$ in which $V$ is contained
maximally. By induction over the index in $G$ we may assume that these
subgroups are included in $\calS$. Then $V$ is obtained as a subgroup in
$\calV_W$ in step~\ref{step5}, and for each such subgroup $W$ the maximality
inclusion is recorded in $I$.

The same argument shows that any maximality relation amongst the subgroups
with be recorded in $I$.
\end{proof}

We note a few places of possible improvements:

In step~\ref{step4}, we
note that if $G$ is a permutation group, we can furthermore restrict $\calM$ to
those subgroups whose
orbit lengths on the permutation domain can be partitioned by those of $U$.
This gives a significant speedup in case of ``generic'' larger groups
such as $S_n$.
\smallskip

In the course of the calculation
several subgroups in $\calS$ may have been obtained as
conjugates of the same group. In this situation the maximal subgroup
calculation in step~\ref{step4} can transfer the list $\calM$ of
(representatives of) maximal subgroup representative from one subgroup
to another (as appropriate conjugates).

The cost of this algorithm is roughly proportional to the number of
intermediate subgroups, as every subgroup gets processed in the same way.
While it would be possible to carry partial information about maximal
subgroups through the iteration, this will not change the asymptotic
behavior.

The implicit assumption of few intermediate subgroups thus makes this
approach feasible. As the examples in section~\ref{xampl} show,
the method is feasible also in practice.

\subsection{Embedded Conjugates:}

To describe the the required subroutine, we consider 
first the following, related, problem:
Given a group $A$ and two subgroups $A,B\le G$, we are seeking to find all
conjugates $B^g\le A$ (that is representatives up to $A$-conjugacy thereof),
and for each conjugate subgroup $B^g$ a conjugating elements $g$.

If the $G$-orbit of $B$, that is $\gpind{G}{N_G(B)}$ is small,
we can simply determine this orbit up to
$A$-conjugacy, parameterized by double cosets $\dnk{N_G(B)}{G}{A}$, and test
which conjugates of $B$ lie in $A$.
\smallskip

If $G$ is a permutation group, it also might be possible to use a backtrack
search, similar to that of a normalizer
calculation~\cite{leon97,Theissen97}, to find elements that conjugate $B$
into $A$. Since the groups are of different order, however, existing
refinements would not be available, and the search therefore could easily
degenerate into testing all double cosets as just described. We thus have
not investigated such an approach further.
\medskip

For all other cases, we use an approach that is
motivated by the generic isomorphism search routine~\cite[\S9.3.1]{holtbook},\cite[V.5]{hulpkediss}, reducing
conjugation of subgroups to conjugation of elements:

Suppose that $B=\gen{b_1,b_2,\ldots,b_k}$. Any element $g$ conjugating $B$
into $A$ must (this is necessary and sufficient) map all of the $b_i$ to elements of $A$ and these images
must lie in conjugacy classes of $A$. 
\medskip

This yields the following algorithm, whose input is a $k$-element generating
sequence of $B$, together with the groups $G$ and $A$.
\smallskip

At the start we precompute for each index $i$ a list $\calA_i$ of those
$A$-conjugacy classes $\calC\subset A\cap b_i^G$.
We can do this based on a list of conjugacy classes
of $A$ and explicit element conjugacy tests.

If $A\cap b_i^G$ is empty for any $i$, we know that $B$ cannot be conjugated
into $A$.

In the case of a large $A$ and small $B$ it is often possible to select
generators $\{b_i\}$ with particular properties, say of prime order, which
allows to limit the conjugacy classes of $A$ which are required, in the
example of prime order it would be the classes intersecting a Sylow
subgroup.
\medskip

We now describe the main part of the algorithm, which is a recursive depth
first routine
${\tt Search}(C,i,g)$ that takes as parameters an index $i$,
a conjugating element $g\in G$ such that $b_j^g\in A$ for all $j<i$,
and a subgroup $C=C_G(b_1^g,b_2^g,\cdots,b_{i-1}^g)\le G$.

This routine is called once as ${\tt Search}(G,1,1_G)$.
It collects the result pairs $(B^g,g)$ in a global list
$\calB$ that is initialized to an empty list at the start.
\smallskip

The ${\tt Search}(C,i,g)$ routine then proceeds as follows:
\begin{enumerate}[1.]
\item
\label{schritt1}
If $i>k$ (that is all generators are mapped already) store $(B^g,g)$
in $\calB$, and return.
\item 
\label{schritt2}
Otherwise, let $x=b_i^g$ and set $D=C\cap A$.
\item
\label{schritt3}
For every class $Y\in\calA_i$, let $\{y_j\}_j$ be a set of
representatives of the $D$-classes that partition $Y$. These
representatives can be obtained by conjugating a fixed representative $y\in
Y$ with representatives of the double cosets $\dnk{C_A(y)}{A}{D}$.
\item For every representative $y_j$, test whether there is 
$d\in D$ such that $x^d=y_j$. If so, call recursively ${\tt
Search}(C_C(y_j),i+1,g\cdot d)$.
\item Iterate until all $y_j$ for all classes $Y\in\calA_i$ have been
tested. Afterwards return.
\end{enumerate}

If $C$ (and thus $D$) is small in step~\ref{schritt3}, the number of double
cosets could be large. In this case it could be worth to instead consider
$C$-conjugates of $x$, up to $D$-conjugacy (parameterized by the double
cosets $\dnk{C_C(x)}{C}{D}$) and test which conjugates lie in $A$.

\begin{lemma}
When the call to ${\tt Search}(G,1,1_G)$ returns, the list $\calB$ contains
pairs $(B^g,g)$ such that $B^g\le A$ and every conjugate $B^h\le A$ will be
$A$-conjugate to a subgroup $B^g$ that arises in $\calB$.
\end{lemma}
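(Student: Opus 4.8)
The plan is to establish the two halves of the assertion separately, both by an induction that follows the recursion tree of ${\tt Search}$. Throughout I would carry the loop invariant that whenever ${\tt Search}(C,i,g)$ is actually entered one has $b_j^g\in A$ for all $j<i$ and $C=C_G(b_1^g,\dots,b_{i-1}^g)$. First I would verify that this invariant is preserved. The initial call ${\tt Search}(G,1,1_G)$ satisfies it vacuously, and when the routine recurses with parameters $(C_C(y_j),i+1,g d)$ the element $d$ lies in $D\le C$, so $d$ centralises every $b_j^g$ with $j<i$; hence $b_j^{gd}=b_j^g\in A$ is unchanged, while $b_i^{gd}=x^d=y_j\in A$, and $C_G(b_1^{gd},\dots,b_i^{gd})=C\cap C_G(y_j)=C_C(y_j)$ is exactly the new first argument. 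This yields soundness at once: a pair is stored only in step~\ref{schritt1}, where $i=k+1$, so $b_j^g\in A$ for all $j$ and therefore $B^g=\gen{b_1^g,\dots,b_k^g}\le A$.

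For completeness I would fix $h\in G$ with $B^h\le A$, so that $b_j^h\in A$ for every $j$, and prove by induction on $i$ the statement that some executed call ${\tt Search}(C,i,g)$ comes equipped with an element $a\in A$ satisfying $b_j^{ga}=b_j^h$ for all $j<i$. For $i=1$ the call ${\tt Search}(G,1,1_G)$ and any $a\in A$ serve. For the inductive step set $x=b_i^g$ and $z=b_i^h$; since $z\in A\cap b_i^G$, its $A$-class $Y$ is one of the classes collected in $\calA_i$. The element $v:=g^{-1}ha^{-1}$ centralises each $b_j^g$ with $j<i$, because $(b_j^g)^v=(b_j^h)^{a^{-1}}=b_j^g$ by the inductive hypothesis, so $v\in C$; moreover $x^v=z^{a^{-1}}\in A$ and, as $a^{-1}\in A$, this image lies in $Y$. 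Hence $w:=x^v$ falls into one of the $D$-classes partitioning $Y$, say with the representative $y_{j_0}$ produced in step~\ref{schritt3}, so $w=y_{j_0}^{e}$ for some $e\in D$. Then $x^{ve^{-1}}=y_{j_0}$ with $ve^{-1}\in C$, the branch for $y_{j_0}$ is taken, and the routine recurses with $g':=g\,(ve^{-1})=h a^{-1}e^{-1}$. Putting $a':=ea\in A$ gives $g'a'=h$, whence $b_j^{g'a'}=b_j^h$ for all $j$, which is the claim at level $i+1$. Applying it at $i=k+1$ produces a stored pair $(B^g,g)$ and an $a\in A$ with $b_j^{ga}=b_j^h$ for all $j$, so $B^h=B^{ga}=(B^g)^a$ is $A$-conjugate to a subgroup appearing in $\calB$.

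The step I expect to be the genuine obstacle is the claim that the representatives $y_j$ enumerated in step~\ref{schritt3} exhaust \emph{every} $D$-class of $Y$, together with the matching assertion that the element conjugating $x$ onto the chosen $y_{j_0}$ can be found where the routine looks for it. The conjugator $ve^{-1}$ above lies a priori only in $C$, not in $D=C\cap A$, so I would have to justify that enumerating candidate images up to $D$-conjugacy, rather than $C$-conjugacy, discards no branch. This is precisely what the double-coset parametrisation $\dnk{C_A(y)}{A}{D}$ secures: its representatives biject with the $D$-orbits on the single $A$-class $Y\cong A/C_A(y)$, so each $D$-class of $Y$ is represented exactly once, and the dual reformulation by $C$-conjugates of $x$ up to $\dnk{C_C(x)}{C}{D}$ shows that testing membership in $A$ after a $C$-conjugation meets exactly these classes.

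Verifying this double-coset bookkeeping, and checking that the precomputed $\calA_i$ really contains every $A$-class meeting $b_i^G$ so that $Y$ is always available, is the technical heart of the argument; the remaining containments are the routine invariant-chasing sketched above. I would therefore present soundness and the base case quickly, spend the bulk of the write-up on the single inductive step of completeness, and isolate the exhaustiveness of the $D$-class enumeration as the one lemma that genuinely needs the orbit-counting/double-coset correspondence.
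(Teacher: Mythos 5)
Your proof is correct and takes essentially the same route as the paper's own argument: soundness from the invariant that all previously mapped generators lie in $A$ (and that $C$ is exactly the centralizer of their images), and completeness by induction along the recursion, accumulating a correcting element of $A$ that conjugates $B^h$ onto a stored $B^g$ — your left factors $e\in D$ are precisely the paper's elements $c_i$, built up on the other side. The obstacle you isolate is genuine, but it is a slip in the statement of the algorithm rather than something a proof can supply: since your conjugator $ve^{-1}$ lies only in $C$, the test in step 4 must be read as asking for $d\in C$ with $x^d=y_j$ (as the paper's closing remark about the alternative parametrisation by $\dnk{C_C(x)}{C}{D}$ confirms); under the printed ``$d\in D$'' the routine would reject every branch whenever $x\notin A$ and the lemma would be false, and the paper's own proof silently uses the corrected reading, so your explicit attention to this point is, if anything, more careful than the original.
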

\begin{proof}
We first observe that whenever ${\tt
Search}(C,i,g)$ is called, we have that $b_j^g\in A$ for every $j<i$. Thus
the only results $(B^g,g)$ stored in step~\ref{schritt1} satisfy that
$b_j^g\in A$ for every $j$, that is $B^g\le A$ as required.
\smallskip

We also note that at stage $i$ of the calculation a recursive call for level
$i+1$ is done with a first argument centralizing $b_i^g$.

Thus at every stage of the calculation we have that $C\le C_G(b_j^g)$ for
$j<i$, and therefore the conjugating element $g$ is only modified by a
factor in $C_G(b_1^g,\ldots,b_{i-i}^g)$.
Thus the images $b_j^g$ for $j<i$ remain fixed in stage $i$.
\smallskip

Now assume that $B^h\le A$ for some element $h\in G$. Then $b_i^h\in A$ for
all $i$. Thus there exists $c_1\in A$ such that $b_1^{hc_1}$ is the chosen
representative of one of the classes in $\calA_1$.

We similarly can define elements $c_i$, $i>1$ in the following way: Let
$D=\bigcap_{j<i} C_A(b_j^{hc_1\cdots c_{i-1}})$ and let $\{y_j\}$ be a set of
representatives of the $D$ classes of elements of $A$, then define $c_i$
such that $(b_j^{hc_1\cdots c_{i-1}})^c_i=y_j$ is one of the chosen
representatives. (Clearly we must have that $y_j$ will lie in one of the
classes in $\calA_i$.)

This shows (choosing the $y_j$ in step~\ref{schritt3} as representatives)
that for all $i$ there exists $c_i\in A$ such that
\[
(b_i^h)^{c_1c_2\cdots c_k}=(b_i^h)^{c_i\cdots c_k}=b_i^g
\]
for one of the conjugating elements $g$ given in $\calB$. In particular
$(B^h)^{c_1c_2\cdots c_k}=B^g$ with $c_1c_2\cdots c_k\in A$, proving the
claim.
\end{proof}

To find {\em} all conjugates of $B$ within $A$ we finally form the
$A$-orbits of the subgroups in $\calB$. Note that we do not guarantee that
the subgroups on $\calB$ are not conjugate (or even different). In such a
case there must be elements of $A$ that normalize an embedded
subgroup $B^g\le A$ and induce automorphisms. The potential for this
happening can be checked for a priori by $\gpind{N_G(B)}{C_G(B)}$.
\bigskip

In studying the algorithm, the reader will notice that the inclusion $B^g\le
A$ is simply a consequence of the fact that $\bigcup \calA_i\subset A$. We
thus could replace $A$ with a larger subgroup $L$, normalizing $A$ such as
$L=N_G(A)$, by  initializing the classes $\calA_i$ with the $L$-classes within
$A$ that intersect with $b_i^G$ and assigning $D=C\cap L$ in
step~\ref{schritt2}.

A call to ${\tt Search}(G,1,1_G)$ the produces a list of subgroups, such
that every conjugate $B^g\le A$ is $L$-conjugate to one of the subgroups in
the list. Conjugacy tests then can be used to obtain
representatives of the $L$-classes.

\subsection{Embedding conjugates}

We now consider the dual problem of determining conjugates $A^g$ of a
subgroup $A\le G$ that contain a given subgroup. This will be implemented by
the routine ${\tt EmbeddingConjugates}(G,A,B)$ that has been referred to
already above.
\smallskip

We have that $A^{g^{-1}}\ge B$ if and only if $B^{g}\le A$.
This duality can be translated to double cosets, where 
the double cosets $\dnk{N_G(A)}{G}{N_G(B)}$ are given as sets of inverses of
the double cosets $\dnk{N_G(B)}{G}{N_G(A)}$. Thus inverting representatives
for one set of double cosets yields representatives for the other.

The double cosets $\dnk{N_G(A)}{G}{N_G(B)}$ correspond to $N_G(B)$-orbits on
the $G$-conjugates of $A$; the double cosets $\dnk{N_G(B)}{G}{N_G(A)}$ to
$N_G(A)$ orbits on conjugates of $B$. Together this shows:

\begin{lemma}
Let $\{g_i\}_i$ be a set of elements such that the set groups $B^{g_i}$
contains representatives of the $N_G(A)$-classes of conjugates $B^g\le A$.
Then the set of subgroups $A^{g_i^{-1}}$ contains representatives of the
$N_G(B)$-orbits of conjugates $A^g>B$.
\end{lemma}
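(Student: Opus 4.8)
The plan is to deduce the statement directly from the three facts assembled just before it: the elementwise duality $B^g\le A \iff A^{g^{-1}}\ge B$, the correspondence between double cosets and orbits of conjugate subgroups, and the inversion bijection between the two families of double cosets. First I would fix the standard parameterizations. The $G$-conjugates of $B$ are parameterized by the right cosets $N_G(B)g$ via $N_G(B)g\mapsto B^g$, and since $(B^g)^a=B^{ga}$ for $a\in N_G(A)$, the $N_G(A)$-orbits of conjugates of $B$ are exactly the double cosets $\dnk{N_G(B)}{G}{N_G(A)}$; dually, the $N_G(B)$-orbits of conjugates of $A$ are the double cosets $\dnk{N_G(A)}{G}{N_G(B)}$.

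The key step is to observe that the containment condition is constant on double cosets. If $B^g\le A$, then for $n\in N_G(B)$ and $a\in N_G(A)$ we have $B^{nga}=(B^g)^a\le A^a=A$, so whether $B^g\le A$ holds depends only on the double coset $N_G(B)gN_G(A)$. Hence the $N_G(A)$-classes of conjugates $B^g\le A$ are in bijection with the subset of $\dnk{N_G(B)}{G}{N_G(A)}$ consisting of those double cosets whose representatives $g$ satisfy $B^g\le A$; symmetrically, the $N_G(B)$-orbits of conjugates $A^h\ge B$ are in bijection with the subset of $\dnk{N_G(A)}{G}{N_G(B)}$ singled out by $A^h\ge B$.

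It then remains to match these two subsets. Inversion sends the double coset $N_G(B)gN_G(A)$ to $N_G(A)g^{-1}N_G(B)$ and is a bijection between the two double-coset families; by the elementwise duality, $g$ satisfies $B^g\le A$ precisely when $g^{-1}$ satisfies $A^{g^{-1}}\ge B$, so this bijection restricts to a bijection between the two distinguished subsets. Consequently, if the groups $B^{g_i}$ meet every $N_G(A)$-class of conjugates contained in $A$ --- equivalently, the $g_i$ exhaust the distinguished double cosets in $\dnk{N_G(B)}{G}{N_G(A)}$ --- then the inverses $g_i^{-1}$ exhaust the distinguished double cosets in $\dnk{N_G(A)}{G}{N_G(B)}$, so the subgroups $A^{g_i^{-1}}$ meet every $N_G(B)$-orbit of conjugates containing $B$. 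Since the claim asks only for a covering family (``contains representatives''), surjectivity onto orbits suffices and no injectivity need be argued.

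I expect no genuinely hard step here; the argument is a bookkeeping exercise layering the containment condition on top of the already-established inversion duality of double cosets. The one point requiring care is the direction-of-action and inverse conventions: one must check that the right-coset parameterization $N_G(B)g\mapsto B^g$ interacts with the $N_G(A)$-action on the correct side, so that orbits become $\dnk{N_G(B)}{G}{N_G(A)}$ rather than the reversed double cosets, and that the containment condition really is a union of whole double cosets rather than of single cosets. Keeping these conventions consistent is exactly what guarantees that the inversion map lands in the intended family.
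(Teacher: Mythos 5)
Your proof is correct and takes essentially the same route as the paper: the paper derives the lemma from exactly the three facts you cite (the elementwise duality $B^g\le A \Leftrightarrow A^{g^{-1}}\ge B$, the identification of double cosets with orbits of conjugate subgroups, and the inversion bijection between the two double-coset families), concluding with ``Together this shows'' and leaving the bookkeeping implicit. Your write-up only adds the explicit verification that the containment condition is constant on whole double cosets, a detail the paper takes for granted.
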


A set of such elements $g_i$ satisfying the former condition was obtained in
the previous section. This gives the following algorithm 
${\tt EmbeddingConjugates}(G,A,B)$:

\begin{enumerate}[1.]
\item Let $L=N_G(A)$.
\item Select a generating set $\{b_1,\ldots,b_k\}$ of $B$. For each $i$
let $\calA_i$ be a list of the $L$-conjugacy classes that partition 
$A\cap b_i^G$.
\item Set $\calB=[\,]$ and call ${\tt Search}(G,1,1_G)$.
\item Let $\calC=\{A^{g^{-1}}\mid (B^g,g)\in\calB\}$.
\item Return the union of the $N_G(B)$ orbits of the elements of $\calC$.
\end{enumerate}

It would be easy to also keep track of conjugating elements for the
subgroups in $\calC$.

\section{Examples}
\label{xampl}

The algorithm, as described, has been implemented by the author in {\sf
GAP}~\cite{GAP4} and will be available as part of the 4.9 release (through
significantly improved performance of the operation {\tt
IntermediateSubgroups}).
\medskip

We indicate the performance of the algorithm
in a number of examples, in particular in comparison to
the old, block-based, method discussed at the end of the first section.
The examples have been chosen primarily for being easily reproducible (without a
need to list explicit generators) cases of
non-maximal subgroups with a moderate number of intermediate subgroups.

While the algorithm per se does not make assumptions about the way the group
is represented, most of the examples were chosen as permutation groups as for these
groups the practically usable implementations in {\sf GAP} of the
underlying routines,  in particular element conjugacy and double cosets,
perform more smoothly than for matrix groups.

For polycyclic groups the
number of intermediate subgroups in general larger.

In the following examples,
the notation $\mbox{Syl}_p$
indicates a $p$-Sylow subgroup.


If a subgroup is given by a structure this implies that there is a unique
such subgroup up to automorphisms. If there are two such subgroups the
different cases will be distinguished as \#1 and \#2.

{\#} counts the number of proper intermediate subgroups (i.e.
excluding $G$ and $U$).
$t$ is the runtime in seconds on a 3.7 GHz 2013 MacPro with ample memory.

When maximal subgroups of simple groups were required, all examples
utilized a lookup, such a calculation thus did not contribute significantly
to the overall runtime. 

Table~\ref{smallxam} shows comparative timings to the old (block-based)
algorithm. The timings indicate that for indices larger than a few hundred
the new method is universally superior to the old one, while for smaller
indices the naive block-based approach works faster.
We have not examined examples with indices smaller than hundred, as these
often involve maximal subgroups or factor groups of significantly smaller
order.

The solvable groups given in the last lines are maximal
subgroups of $\mbox{Fi}_{22}$, respectively $\mbox{Co}_1$,
given by a pc presentation. 

\medskip

\begin{table}
\begin{center}
\begin{tabular}{l|l|r|r|r|r}
$G$
&$U$
&Index
&\#
&$t_{\mbox{Old}}$
&$t_{\mbox{New}}$
\\
\hline
$S_6$
&$1$
&$2^{4}3^{2}5$
&$1453$
&2.6
&2.8
\\
$A_7$
&$2$
&$2^{2}3^{2}5
{\cdot}7$
&$156$
&52
&0.8
\\
$S_5\wr S_2$
&$\mbox{Syl}_5$
&$2^{7}3^{2}$
&$58$
&5.7
&2
\\
$HS$
&$S_7$
&$2^{5}5^{2}11$
&$3$
&1.4
&1.9
\\
$HS$
&$\mbox{Solv}1152$
&$2^{2}5^{3}7
{\cdot}11$
&$2$
&6
&2.2
\\
$\mbox{PSL}_4(3)$
&$A_6 (\#1)$
&$2^{4}3^{4}13$
&$1$
&2.3
&1
\\
$\mbox{PSL}_4(3)$
&$A_6 (\#2)$
&$2^{4}3^{4}13$
&$17$
&28
&4
\\
$\mbox{Sp}_6(2)$
&$\mbox{PSL}_3(2) (\#1)$
&$2^{6}3^{3}5$
&$6$
&2
&1
\\
$\mbox{Sp}_6(2)$
&$\mbox{PSL}_3(2) (\#2)$
&$2^{6}3^{3}5$
&$9$
&4.5
&3.1
\\
$3^{1+6}{:}2^{3+4}{:}3^2{:}2$
&$\mbox{Syl}_3$
&$2^8$&$19$
&0.2&1.5\\
$3^{1+6}{:}2^{3+4}{:}3^2{:}2$
&$\mbox{Syl}_2$
&$3^9$&$37$
&231&191\\
$3^{3+4}{:}2(S_4{\times}S_4)$
&$\mbox{Syl}_2$
&$3^9$&$15$
&254&1.3\\
$3^{3+4}{:}2(S_4{\times}S_4)$
&$\mbox{Syl}_3$
&$2^7$&$30$
&0.2&5\\

\end{tabular}
\end{center}
\caption{Comparison between old and new method}
\label{smallxam}
\end{table}

Table~\ref{xamples} gives examples for some cases of significantly larger
index for which the old algorithm would not terminate within reasonable
time, thus only timings for the new method are given.

\begin{table}

\begin{center}
\begin{tabular}{l|l|l|r|r|r}
$G$
&Degree
&$U$
&Index
&\#
&$t$
\\
\hline
$S_{11}$
&$11$
&$11:{5}$
&$2^{7}3^{4}5^{2}7
{\cdot}11$
&$4$
&0.5
\\
$S_{10}\times S_{10}$
&$20$
&$A_{10}\mbox{(diag)}$
&$2^{9}3^{4}5^{2}7$
&$5$
&8
\\
$HS$
&$100$
&$\mbox{Syl}_7$
&$2^{9}3^{2}5^{3}11$
&$41$
&9
\\
$HS$
&$100$
&$\mbox{Syl}_3$
&$2^{9}5^{3}7
{\cdot}11$
&$249$
&12
\\
$HS$
&$100$
&$(\mbox{Syl}_2)'$
&$2^{3}3^{2}5^{3}7
{\cdot}11$
&$57$
&6
\\
$S_{24}$
&$24$
&$24:8$
&$2^{16}3^{9}5^{4}7^{3}11^{2}13
{\cdot}17
{\cdot}19
{\cdot}23$
&$409$
&297
\\
$S_{25}$
&$25$
&$C_5\times C_5$
&$2^{22}3^{10}5^{4}7^{3}11^{2}13
{\cdot}17
{\cdot}19
{\cdot}23$
&$2734$
&418
\\
$S_{25}$
&$25$
&$C_{25}$
&$2^{22}3^{10}5^{4}7^{3}11^{2}13
{\cdot}17
{\cdot}19
{\cdot}23$
&$127$
&129
\\
$Co_3$
&$276$
&$\mbox{Syl}_2$
&$3^{7}5^{3}7
{\cdot}11
{\cdot}23$
&$12$
&316
\\
$Co_3$
&$276$
&$\mbox{Syl}_7$
&$2^{10}3^{7}5^{3}11
{\cdot}23$
&$396$
&310
\\
$L_5(7)$
&$2801$
&$\mbox{Syl}_{2801}$
&$2^{11}3^{5}5^{2}7^{10}19$
&$1$
&168
\\
$L_8(2)$
&$255$
&$\mbox{Syl}_{127}$
&$2^{28}3^{5}5^{2}7^{2}17
{\cdot}31$
&$8$
&100
\\
$L_8(2)$
&$255$
&$\mbox{Syl}_7$
&$2^{28}3^{5}5^{2}17
{\cdot}31
{\cdot}127$
&$14074$
&3305
\\
\end{tabular}

\end{center}
\caption{Larger example calculations}
\label{xamples}
\end{table}

As expected, the timings show that the cost is dominated by the number of subgroups to be
found rather than permutation degree.
\medskip

As an illustration of use, consider the case of a $2$-Sylow subgroup of
$Co_3$. The {\sf GAP} calculation:
\begin{verbatim}
gap> g:=SimpleGroup("Co3");;s:=SylowSubgroup(g,2);
<permutation group of size 1024 with 10 generators>
gap> int:=IntermediateSubgroups(g,s);
rec( inclusions := [[0, 1], [0, 2], [0, 3], [0, 4], [1, 7],
  [1, 8], [1, 10], [2, 5], [2, 10], [3, 5], [3, 6], [3, 7],
  [4, 6], [4, 8], [5, 9], [5, 12], [6, 9], [6, 11], [7, 11],
  [7, 12], [8, 11], [9, 13], [10, 12], [11, 13], [12, 13] ],
subgroups := [<permutation group of size 3072 with 9 generators>,
    [ ... a list of 12 subgroups ]
gap> List(int.subgroups,Size);
[ 3072,3072,3072,3072,9216,9216,21504,21504,27648,46080,
  322560,2903040 ]
\end{verbatim}
returns the subgroups lying between $s$ and $g$, numbered $1$ to $12$, as
well as a list of maximal inclusion relations that is depicted in
figure~\ref{conwayincl}.
\begin{figure}
\begin{center}
\includegraphics[width=5cm]{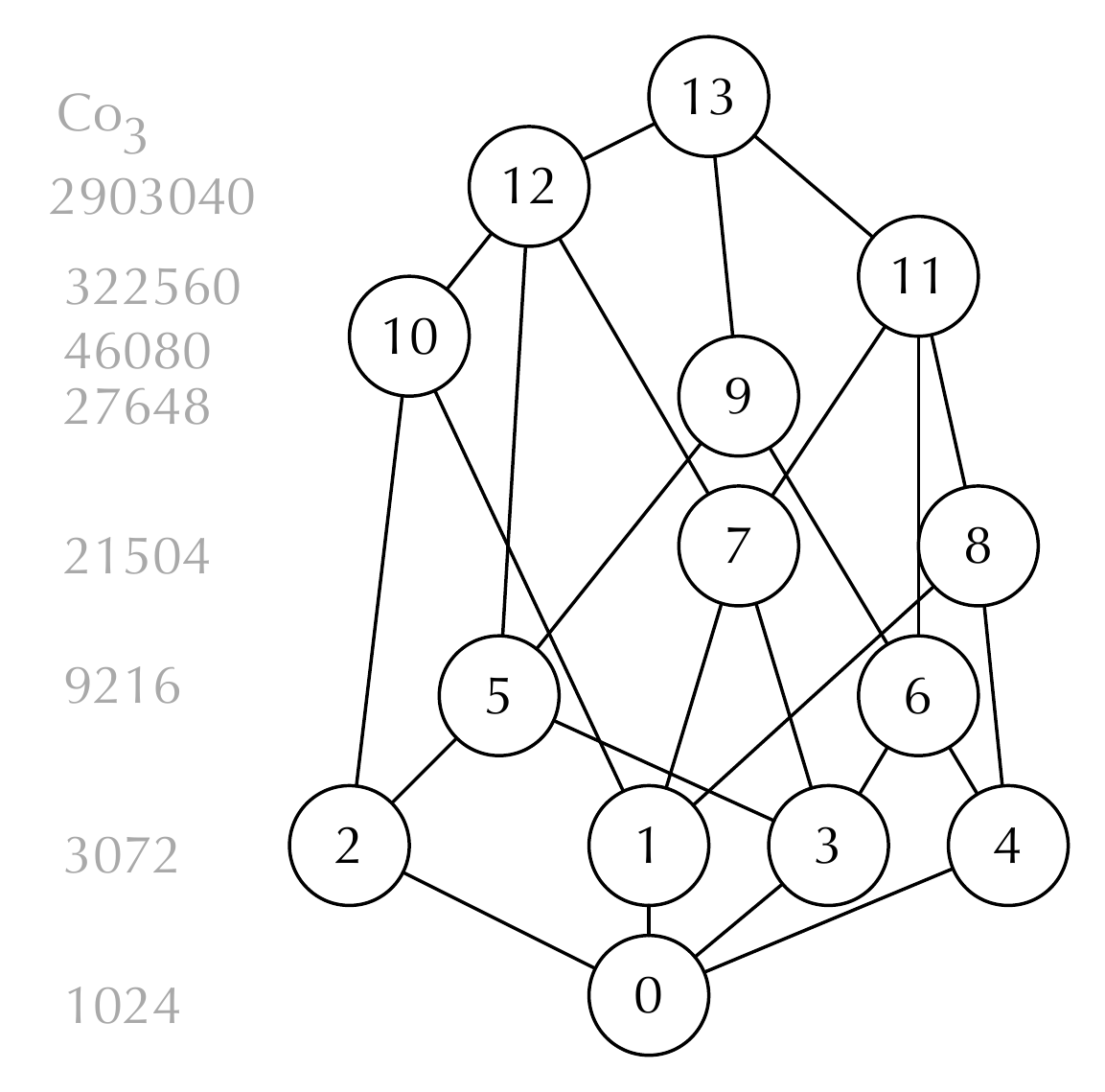}
\end{center}
\caption{Intermediate subgroups between $Co_3$ and a $2$-Sylow subgroup.}
\label{conwayincl}
\end{figure}

\section*{Acknowledgement}
The author's work has been supported in part by
Simons' Foundation Collaboration Grant~244502.


\begin{thebibliography}{CHSS05}

\bibitem[Asc08]{aschbacher08}
Michael Aschbacher.
\newblock On intervals in subgroup lattices of finite groups.
\newblock {\em J. Amer. Math. Soc.}, 21(3):809--830, 2008.

\bibitem[BS92]{bealsseress92}
Robert Beals and {\'A}kos Seress.
\newblock Structure forest and composition factors for small base groups in
  nearly linear time.
\newblock In {\em {Proceedings of the {$24^{\rm th}$} ACM Symposium on Theory
  of Computing}}, pages 116--125. ACM Press, 1992.

\bibitem[CCH01]{coxcannonholtlatt}
John Cannon, Bruce Cox, and Derek Holt.
\newblock Computing the subgroup lattice of a permutation group.
\newblock {\em J. Symbolic Comput.}, 31(1/2):149--161, 2001.

\bibitem[CH04]{holtcannonmaxgroup}
John Cannon and Derek Holt.
\newblock Computing maximal subgroups of finite groups.
\newblock {\em J. Symbolic Comput.}, 37(5):589--609, 2004.

\bibitem[CHSS05]{cannonholtslatterysteel}
John~J. Cannon, Derek~F. Holt, Michael Slattery, and Allan~K. Steel.
\newblock Computing subgroups of bounded index in a finite group.
\newblock {\em J. Symbolic Comput.}, 40(2):1013--1022, 2005.

\bibitem[DeM12]{deMeoThesis}
Willian~J. DeMeo.
\newblock {\em Congruence Lattices of Finite Algebras}.
\newblock PhD thesis, The University of Hawai'i at M\={a}noa, 2012.

\bibitem[DFH17]{detinkoflanneryhulpke17}
A.~S. Detinko, D.~L. Flannery, and A.~Hulpke.
\newblock Zariski density and computing in arithmetic groups.
\newblock {\em Math. Comp.}, accepted 2017.
\newblock https://doi.org/10.1090/mcom/3236.

\bibitem[DM88]{dixonmajeed}
John~D. Dixon and Abdul Majeed.
\newblock Coset representatives for permutation groups.
\newblock {\em Portugal. Math.}, 45(1):61--68, 1988.

\bibitem[EH01]{eickhulpke01}
Bettina Eick and Alexander Hulpke.
\newblock Computing the maximal subgroups of a permutation group {I}.
\newblock In William~M. Kantor and {\'A}kos Seress, editors, {\em Proceedings
  of the International Conference at {The} {Ohio} {State} {University}, {June}
  15--19, 1999}, volume~8 of {\em Ohio State University Mathematical Research
  Institute Publications}, pages 155--168, Berlin, 2001. de Gruyter.

\bibitem[GAP16]{GAP4}
The GAP~Group, \url{http://www.gap-system.org}.
\newblock {\em {GAP -- Groups, Algorithms, and Programming, Version 4.8.6}},
  2016.

\bibitem[GS63]{graetzerschmidt63}
G.~Gr\"atzer and E.~T. Schmidt.
\newblock Characterizations of congruence lattices of abstract algebras.
\newblock {\em Acta Sci. Math. (Szeged)}, 24:34--59, 1963.

\bibitem[HEO05]{holtbook}
Derek~F. Holt, Bettina Eick, and Eamonn~A. O'Brien.
\newblock {\em Handbook of {Computational Group Theory}}.
\newblock Discrete Mathematics and its Applications. Chapman \& Hall/CRC, Boca
  Raton, FL, 2005.

\bibitem[Hul96]{hulpkediss}
Alexander Hulpke.
\newblock {\em Konstruktion transitiver {P}ermutationsgruppen}.
\newblock PhD thesis, Rheinisch-Westf{\accent127 a}lische Tech\-ni\-sche
  Hoch\-schule, Aachen, Germany, 1996.

\bibitem[Hul99]{hulpkeinvar}
Alexander Hulpke.
\newblock Computing subgroups invariant under a set of automorphisms.
\newblock {\em J. Symbolic Comput.}, 27(4):415--427, 1999.
\newblock (ID jsco.1998.0260).

\bibitem[Hul13]{hulpketfsub}
Alexander Hulpke.
\newblock Calculation of the subgroups of a trivial-fitting group.
\newblock In {\em I{SSAC} 2013---{P}roceedings of the 38th {I}nternational
  {S}ymposium on {S}ymbolic and {A}lgebraic {C}omputation}, pages 205--210.
  ACM, New York, 2013.

\bibitem[Lau82]{laue82}
Reinhard Laue.
\newblock Computing double coset representatives for the generation of solvable
  groups.
\newblock In Jacques Calmet, editor, {\em EUROCAM '82}, volume 144 of {\em
  Lecture Notes in Computer Science}. Springer, 1982.

\bibitem[Leo97]{leon97}
Jeffrey~S. Leon.
\newblock Partitions, refinements, and permutation group computation.
\newblock In Larry Finkelstein and William~M. Kantor, editors, {\em Proceedings
  of the 2nd {DIMACS} Workshop held at {Rutgers University}, {New Brunswick,
  NJ}, {June} 7--10, 1995}, volume~28 of {\em {DIMACS}: Series in Discrete
  Mathematics and Theoretical Computer Science}, pages 123--158. American
  Mathematical Society, Providence, RI, 1997.

\bibitem[LR11]{longreid11}
D.~D. Long and A.~W. Reid.
\newblock Small subgroups of {${\rm SL}(3,\mathbb{Z})$}.
\newblock {\em Exp. Math.}, 20(4):412--425, 2011.

\bibitem[Neu60]{neubueser60}
Joachim Neub{\"u}ser.
\newblock {Untersuchungen des Untergruppenverbandes endlicher Gruppen auf einer
  {pro\-gramm\-ge\-steu\-er\-ten} elektronischen Dualmaschine}.
\newblock {\em Numer. Math.}, 2:280--292, 1960.

\bibitem[P{\'al}f95]{palfy95}
P.~P. P{\'al}fy.
\newblock Intervals in subgroup lattices of finite groups.
\newblock In C.~M. Campbell, T.~C. Hurley, E.~F. Robertson, S.~J. Tobin, and
  J.~J. Ward, editors, {\em Groups '93 Galway/St~Andrews}, volume 212 of {\em
  London Mathematical Society Lecture Note Series}, pages 482--494. Cambridge
  University Press, 1995.

\bibitem[PP80]{palfypudlak}
P\'eter~P\'al P\'alfy and Pavel Pudl\'ak.
\newblock Congruence lattices of finite algebras and intervals in subgroup
  lattices of finite groups.
\newblock {\em Algebra Universalis}, 11(1):22--27, 1980.

\bibitem[{Sch}90]{schmalz90}
Bernd {Schmalz}.
\newblock {Verwendung von Untergruppenleitern zur Bestimmung von
  Doppelnebenklassen.}
\newblock {\em Bayreuth. Math. Schr.}, 31:109--143, 1990.

\bibitem[Sha03]{shareshian03}
John Shareshian.
\newblock Topology of order complexes of intervals in subgroup lattices.
\newblock {\em J. Algebra}, 268(2):677--686, 2003.

\bibitem[The97]{Theissen97}
Heiko Thei{\ss}en.
\newblock {\em {Eine Methode zur Nor\-ma\-li\-sa\-tor\-be\-rech\-nung in
  Per\-mu\-ta\-tions\-gruppen mit Anwendungen in der Konstruktion primitiver
  Gruppen}}.
\newblock Dissertation, Rheinisch-Westf{\accent127 a}lische Tech\-ni\-sche
  Hoch\-schule, Aachen, Germany, 1997.

\bibitem[Wat96]{watatani96}
Yasuo Watatani.
\newblock Lattices of intermediate subfactors.
\newblock {\em J. Funct. Anal.}, 140(2):312--334, 1996.

\end{thebibliography}

\end{document}